\documentclass[12pt]{iopart}
\expandafter\let\csname equation*\endcsname\relax
\expandafter\let\csname endequation*\endcsname\relax

\usepackage{amsmath,amsfonts,amssymb,amsthm}
\usepackage{graphicx}
\usepackage{color,colordvi}
\usepackage{hyperref}

\def\softd{{\leavevmode\setbox1=\hbox{d}%
\hbox to 1.05\wd1{d\kern-0.4ex{\char039}\hss}}}
\def\softt{{\leavevmode\setbox1=\hbox{t}%
\hbox to \wd1{t\kern-0.6ex{\char039}\hss}}}

\newcommand{\R}{\mathbb{R}}
\newcommand{\Z}{\mathbb{Z}}
\newcommand{\CC}{\mathcal{C}}
\newcommand{\HH}{\mathcal{H}}
\newcommand{\OO}{\mathcal{O}}
\newcommand{\cS}{\mathcal{S}}
\newcommand{\ee}{\mathrm{e}}
\newcommand{\D}{\mathrm{d}}

\newtheorem{theorem}{Theorem}[section]

\newtheorem{conjecture}{Conjecture}[section]
\newtheorem{remark}{Remark}[section]
\newtheorem{remarks}{Remarks}[section]


\begin{document}

\title[Point interaction optimization]
{An optimization problem for finite point interaction families}

\author{Pavel Exner}
\address{Nuclear Physics Institute, Academy of Sciences of the Czech Republic,
Hlavn\'{i} 130, 25068 \v{R}e\v{z} near Prague, Czech Republic}
\address{Doppler Institute, Czech Technical University, B\v{r}ehov\'{a} 7, 11519 Prague, Czech Republic}
\ead{exner@ujf.cas.cz}

\begin{abstract}
We consider the spectral problem for a family of $N$ point interactions of the same strength confined to a manifold with a rotational symmetry, a circle or a sphere, and ask for configurations that optimize the ground state energy of the corresponding singular Schr\"odinger operator. In case of the circle the principal eigenvalue is sharply maximized if the point interactions are distributed at equal distances. The analogous question for the sphere is much harder and reduces to a modification of Thomson problem; we have been able to indicate the unique maximizer configurations for $N=2,\,3,\,4,\,6,\,12$. We also discuss the optimization for one-dimensional point interactions on an interval with periodic boundary conditions. We show that the equidistant distributions give rise to maximum ground state eigenvalue if the interactions are attractive, in the repulsive case we get the same result for weak and strong coupling and we conjecture that it is valid generally.
\end{abstract}

\pacs{02.30.Tb, 03.65.Ge, 03.65.Db}

%
\vspace{2pc} \noindent{\it Keywords}: point interactions, ground state, optimization, rotational symmetry

%
\submitto{\JPA}

%
%
%

\section{Introduction} \label{s: intro}

The search for geometric configurations that make a given spectral quantity optimal is a trademark topic in mathematical physics with a century long tradition reaching back to the Faber's and Krahn's celebrated proof of Lord Rayleigh's conjecture about the lowest drum tone \cite{Fa23,Kr25}. Without recalling the long and rich history we note a recent wave of interest to the topic concerning the ground state of Robin billiards \cite{FK15,AFK17,KL18,KL19} and of leaky quantum structures \cite{EL17,EL18,EK19}, in a sense the one- and two-sided aspects of the same problem.

This brings to mind similar optimization problems for families of point interactions somehow neglected in those recent efforts. More then a decade ago the ground state optimization problem was solved for point interactions placed at the vertices of an equilateral polygon \cite{Ex06}, but the work mentioned above inspires another question, namely to consider point interactions placed at a manifold of maximum symmetry, a circle or a sphere, and to ask about optimal configurations. We note that dealing with a finite family of point interactions is easier than the optimization for billiards and leaky structures since one has to deal with matrices instead of integral operators, however, some aspects of discrete optimization may be on the contrary more involved; to illustrate this claim one can compare the proof in \cite{Ex06} with that used in the continuous analogue of that particular problem \cite{EHL06}.

The two indicated problems differ substantially in the degree of difficulty. In case of point interactions placed at a circle, both in two and three dimensions, it is easy to guess the optimal configuration and, maybe with some effort, to confirm the guess. The optimization on a sphere, on the other hand, is a rather hard task which reminds one about another problem more than a century old, a search for configurations of point charges on a sphere that minimize the electrostatic energy \cite{Th04}. Our question will be reduced to a similar minimization problem with a different `potential' and using results from algebraic combinatorics we will be able to find an answer in five cases with a low number of point interactions leaving the other situations open.

We also include into the discussion the case of one-dimensional point interactions which are less singular than their two- and three-dimensional counterparts. There is no zero measure subset analogous to a circle or a sphere here, but we can consider such interactions on a loop, in other words, on an interval with periodic boundary conditions. As long as the interactions are attractive the ground state energy is again maximized by configurations with a maximum symmetry as can be shown in the same, and even a bit simpler way. The case of repulsive $\delta$ potentials is more complicated and in this paper we provide a partial result only.

\section{A warm up: attractive point interactions on a loop}
\label{s:1Dloop}
\setcounter{equation}{0}

To begin with, let us investigate the Hamiltonian describing $N$ attractive point interactions, $N\ge 2$, of identical strength on a loop; in view of the scaling properties of the system we may assume without loss of generality that the loop length is $2\pi$. In other words, we consider $\HH = L^2(I)$ with $I=(0,2\pi)$ and the operator written formally as
\begin{equation} \label{1Dham}
     H_{\alpha,Y} = -\frac{\D^2}{\D x^2} + \alpha \sum_{n=1}^N \delta(x-y_n)
\end{equation}
with $\alpha<0$ and $Y:=\{y_j\}_{j=1}^N$ such that $0<y_1<\cdots<y_N<2\pi$, which means that $H_{\alpha,Y}$ acts as a negative Laplacian on functions from $H^2(I)$ satisfying the conditions
\begin{subequations}
   \begin{align}
     & \psi(y_j+) = \psi(y_j-) =:\psi(y_j)\,, \quad \psi'(y_j+)-\psi'(y_j-)= \alpha \psi(y_j)\,, \quad j=1,\dots,N\,, \label{1Ddelta} \\
     & \psi(2\pi-) = \psi(0+)\,, \quad \psi'(2\pi-) = \psi'(0+)\,. \label{periodic}
   \end{align}
\end{subequations}
Alternatively, $H_{\alpha,Y}$ is the unique self-adjoint operator associated with the quadratic form
\begin{equation} \label{1Dhamform}
     q_{\alpha,Y}:\; q_{\alpha,Y}(\psi) = \|\psi'\|^2 + \alpha \sum_{j=1}^N |\psi(y_j)|^2
\end{equation}
defined on periodic functions from $H^1(I)$. It is easy to see that the spectrum of the operator is purely discrete, $\sigma(H_{\alpha,Y}) = \{\lambda_n(\alpha,Y)\}_{n=1}^\infty$, arranged conventionally in the ascending order, and the ground state eigenvalue $\lambda_1(\alpha,Y)$ is simple and negative. Moreover, it follows easily from \eqref{1Dhamform} and the minimax principle that $\lambda_1(\alpha,Y)<0$ for $\alpha<0$; we are interested in the sets $Y$ that optimize the ground state.

To this aim we need the resolvent of the free Hamiltonian $H_0\equiv H_{0,Y}$ which is at the energy $-\kappa^2$ a convolution-type integral operator with the kernel
\begin{equation} \label{1Dgreen}
     G_{i\kappa}(x-y) = \frac{\cosh(\kappa(\pi-|x-y|)}{2\kappa \sinh \pi\kappa}\,.
\end{equation}
To derive this formula one can use the explicit knowledge of the spectrum: the eigenvalues $m^2$ correspond to the eigenfunctions $\eta_m(x) = \frac{1}{\sqrt{2\pi}} \ee^{imx},\: m\in\Z$. Consequently, we have
$$ 
     G_{i\kappa}(x-y) = \sum_{m\in\Z} \frac{\eta(x)\overline\eta(y)}{m^2+\kappa^2}
$$ 
and the sum is easily evaluated \cite[5.4.3.4]{PBM}. It is important to note that the function $G_{i\kappa}$ is strictly convex in the interval $(0,\pi)$. In straightforward analogy with \cite[Thm.~II.2.1.1]{AGHH} one can the express the sought resolvent of $H_{\alpha,Y}$ by means of Krein's formula
\begin{subequations}
   \begin{align}
     & (H_{\alpha,Y}+\kappa^2)^{-1} = G_{i\kappa} + \sum_{j,j'=1}^N [\Gamma_{\alpha,Y}(i\kappa)]_{jj'}^{-1} \big( \overline{G_{i\kappa}(\cdot - y_{j'})}, \cdot \big) G_{i\kappa}(\cdot - y_j)\,, \label{1Dkrein1} \\[.3em]
     & \Gamma_{\alpha,Y}(i\kappa) = - [-\alpha^{-1}\delta_{jj'} + G_{i\kappa}(y_j-y_{j'})]_{j,j'=1}^N\,. \label{1Dkrein2}
   \end{align}
\end{subequations}
This implies that negative eigenvalues of $\sigma(H_{\alpha,Y})$ are obtained as roots of the equation $\det \Gamma_{\alpha,Y}(i\kappa)=0$, in particular, $\lambda_1(\alpha,Y)$ correspond corresponds to the value of $\kappa$ at which the smallest eigenvalue of $\Gamma_{\alpha,Y}(i\kappa)$ vanishes. Indeed, according to \cite{Kr53} the eigenvalues are for any fixed $\alpha<0$ continuously decreasing functions of the energy, and thus also of $-\kappa$ in the interval $(-\infty,0)$. The optimization, or more specifically \emph{maximization} of the ground state is then equivalent to identifying sets $Y$ for which the said smallest eigenvalue of $-\Gamma_{\alpha,Y}(i\kappa)$ is \emph{maximal}.

\begin{theorem}\label{th:1Dmaxim}
Put $\tilde{Y}:= \{ \frac{\pi}{N}(2j-1)\}_{j=1}^N$, then for any $N$-point set $Y$ and any $\alpha<0$ we have
\begin{equation} \label{1Dmaxim}
     \lambda_1(\alpha,Y) \le \lambda_1(\alpha,\tilde{Y})
\end{equation}
and the inequality is sharp if and only if $Y$ is not congruent with $\tilde{Y}$.
\end{theorem}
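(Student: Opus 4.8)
The plan is to turn the Krein-formula description \eqref{1Dkrein1}--\eqref{1Dkrein2} into the minimization of a Perron eigenvalue and to settle that minimization by convexity. First I record what the Krein formula yields: a number $-\kappa^2<0$ belongs to $\sigma(H_{\alpha,Y})$ exactly when $-\alpha^{-1}$ is an eigenvalue of the symmetric matrix with positive entries $\mathcal G_{i\kappa}(Y):=\big[G_{i\kappa}(y_j-y_{j'})\big]_{j,j'=1}^N$. Using the Fourier series of $G_{i\kappa}$ one has $v^\top\mathcal G_{i\kappa}(Y)v=\sum_{m\in\Z}|c_m(v)|^2/(m^2+\kappa^2)$ with $c_m(v)=\frac1{\sqrt{2\pi}}\sum_j v_j\,\ee^{imy_j}$, so for every fixed $v\ne0$ this is strictly decreasing in $\kappa$ (the $y_j$ being distinct, not all $c_m(v)$ vanish); by the min--max principle every eigenvalue of $\mathcal G_{i\kappa}(Y)$ is then continuous and strictly decreasing in $\kappa$, and by Perron--Frobenius the largest one, $\rho(\kappa,Y)$, is simple with a positive eigenvector and decreases from $+\infty$ to $0$ as $\kappa$ runs over $(0,\infty)$. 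Since the lower branches stay below $\rho(\cdot,Y)$, the lowest and hence most negative point of the spectrum, which by the discussion above is the simple ground state $\lambda_1(\alpha,Y)$, equals $-\kappa_1(\alpha,Y)^2$ with $\kappa_1(\alpha,Y)$ the unique root of $\rho(\kappa,Y)=-\alpha^{-1}$. It therefore suffices to prove that for every fixed $\kappa>0$ the set $\tilde Y$ minimizes $\rho(\kappa,\cdot)$, strictly whenever $Y$ is not congruent with $\tilde Y$: comparing the two decreasing curves then gives $\kappa_1(\alpha,Y)\ge\kappa_1(\alpha,\tilde Y)$, strictly in the non-congruent case, which is precisely \eqref{1Dmaxim} together with its sharpness.

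Two remarks reduce this to a single scalar inequality. Testing the form $v\mapsto v^\top\mathcal G_{i\kappa}(Y)v$ on the constant vector gives $\rho(\kappa,Y)\ge\frac1N\sum_{j,j'=1}^N G_{i\kappa}(y_j-y_{j'})=:\frac1N E_\kappa(Y)$ for every $Y$; and when $Y=\tilde Y$ the arc between two equidistant points depends only on the difference of their indices modulo $N$, so $\mathcal G_{i\kappa}(\tilde Y)$ is circulant, its Perron eigenvector is the constant vector, and the bound is an equality, $\rho(\kappa,\tilde Y)=\frac1N E_\kappa(\tilde Y)$. Hence it is enough to show that $\tilde Y$ is the unique minimizer of $E_\kappa$ up to congruence.

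Here the convexity noted after \eqref{1Dgreen} is decisive. Since $\cosh$ is even, the map $d\mapsto G_{i\kappa}(d)=\cosh(\kappa(\pi-d))/(2\kappa\sinh\pi\kappa)$ on $[0,\pi]$ is the restriction of $s\mapsto\tilde g(s):=\cosh(\kappa(s-\pi))/(2\kappa\sinh\pi\kappa)$, which is smooth and strictly convex on the whole of $(0,2\pi)$ and satisfies $\tilde g(s)=\tilde g(2\pi-s)=G_{i\kappa}\big(\min\{s,2\pi-s\}\big)$. Writing the points as $0=p_0<p_1<\dots<p_{N-1}<2\pi$ with consecutive gaps $g_1,\dots,g_N$, $\sum_k g_k=2\pi$, and putting $A^{(m)}_l:=g_l+g_{l+1}+\dots+g_{l+m-1}$ (indices mod $N$) for the arc spanning $m$ consecutive gaps, one groups the off-diagonal terms of $E_\kappa$ by the index step between the two points to get $E_\kappa(Y)=N G_{i\kappa}(0)+\sum_{m=1}^{N-1}\sum_{l=1}^N\tilde g\big(A^{(m)}_l\big)$, where for each $m$ one has $\sum_{l=1}^N A^{(m)}_l=2\pi m$ because every gap lies in exactly $m$ of these arcs. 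Jensen's inequality applied to the strictly convex $\tilde g$ gives $\sum_{l=1}^N\tilde g(A^{(m)}_l)\ge N\,\tilde g(2\pi m/N)$, with equality only if $A^{(m)}_1=\dots=A^{(m)}_N$; summing over $m$ yields $E_\kappa(Y)\ge N G_{i\kappa}(0)+N\sum_{m=1}^{N-1}\tilde g(2\pi m/N)=E_\kappa(\tilde Y)$, and equality already for $m=1$ forces all gaps equal, i.e.\ $Y$ congruent with $\tilde Y$. Chaining the three estimates, $\rho(\kappa,Y)\ge\frac1N E_\kappa(Y)\ge\frac1N E_\kappa(\tilde Y)=\rho(\kappa,\tilde Y)$ with strict inequality whenever $Y\not\cong\tilde Y$, which by the first paragraph completes the proof.

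The genuinely routine parts are the monotonicity and the limits of $\rho(\cdot,Y)$ in $\kappa$ (in the spirit of \cite{Kr53}, and immediate from the displayed representation of $v^\top\mathcal G_{i\kappa}(Y)v$) and the bookkeeping that identifies every pairwise geodesic distance with $\tilde g$ of an arc $A^{(m)}_l$. The step I would present with most care, and the one that really makes the argument work, is the passage from $G_{i\kappa}$ on $[0,\pi]$ to the convex function $\tilde g$ on $(0,2\pi)$: it is precisely what lets the wrap-around of distances on the loop be absorbed, so that one clean application of Jensen per step $m$ closes the estimate and pins down the equidistant configuration as the unique optimizer.
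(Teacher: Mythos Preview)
Your proof is correct and follows essentially the same route as the paper's: reduce to showing that the Perron eigenvalue of the Green matrix $\mathcal G_{i\kappa}(Y)$ (equivalently, the smallest eigenvalue of $\Gamma_{\alpha,Y}(i\kappa)$) is minimized at $\tilde Y$, use that the constant vector is the Perron eigenvector for the circulant matrix $\mathcal G_{i\kappa}(\tilde Y)$, and finish by Jensen's inequality applied to the convex Green's function after grouping pairs by index step. Your extension of $G_{i\kappa}$ to the strictly convex $\tilde g(s)=\cosh(\kappa(s-\pi))/(2\kappa\sinh\pi\kappa)$ on all of $(0,2\pi)$ is in fact a small improvement over the paper's presentation: the paper works with geodesic distances $d_{jj'}\in(0,\pi]$ and then asserts $\frac{1}{\nu_m}\sum_{|j-j'|=m} d_{jj'}=2\pi m/N$, which is only literally true for forward arcs, whereas your formulation with $A^{(m)}_l$ and $\tilde g$ makes the averaging identity hold unconditionally and absorbs the wrap-around cleanly.
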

\begin{proof}
As mentioned above, the eigenvalue $\lambda_1(\alpha,Y)$ is simple and same is true for the smallest eigenvalue $\mu_1(\alpha,Y)$ of $\Gamma_{\alpha,Y}(i\kappa)$. The corresponding eigenfunction can be chosen positive. If $Y=\tilde{Y}$ the system is invariant with respect cyclic shifts by $\frac{2\pi}{N}$ which means that the respective eigenfunction of $\Gamma_{\alpha,\tilde{Y}}(i\kappa)$ is $\varphi_1= \frac{1}{\sqrt{N}}(1,\dots,1)$.

It is useful to extend $Y$ periodically to the real line identifying the points modulo $2\pi$ and note that then all the distances $d_{jj'}= d(y_j, y_{j'}) \in (0,\pi]$. By minimax principle and \eqref{1Dkrein2} we have
\begin{equation} \label{1Dineq}
     \mu_1(\alpha,Y) \le (\varphi_1,\Gamma_{\alpha,Y} \varphi_1) = \frac{1}{\alpha} - \frac{1}{N} \sum_{j,j'=1}^N g_{i\kappa}(d_{jj'})\,,
\end{equation}
where $g_{i\kappa}(d_{jj'}) := G_{i\kappa}(y_j-y_{j'})$. Our aim is to show that the right-hand side does not exceed $\mu_1(\alpha,\tilde{Y}) = (\varphi_1,\Gamma_{\alpha,\tilde{Y}} \varphi_1)$ being sharply smaller if $Y$ is not congruent with $\tilde{Y}$. This will be for sure true if
$$ 
     \sum_{j,j'=1}^N g_{i\kappa}(d_{jj'}) > \sum_{j,j'=1}^N g_{i\kappa}(\tilde{d}_{jj'})
$$ 
would hold for all $\kappa>0$ unless $\{d_{jj'}\} = \{\tilde{d}_{jj'}\}$ holds between the sets of distances determining $Y$ and $\tilde{Y}$, respectively. In fact, we can avoid double counting and consider only summation over $j<j'$. Rearranging the summation, we have to check that
$$ 
     F(\{d_{jj'}\}) := \sum_{m=1}^{[N/2]} \sum_{|j-j'|=m} \big[ g_{i\kappa}(d_{jj'}) - g_{i\kappa}(\tilde{d}_{jj'}) \big] > 0
$$ 
unless $\{d_{jj'}\} = \{\tilde{d}_{jj'}\}$. We recall that $g_{i\kappa}$ is strictly convex in $(0,\pi)$; this allows us to apply Jensen's inequalityto the inner sums which yields
\begin{equation} \label{1Djensen}
     F(\{d_{jj'}\}) \ge \sum_{m=1}^{[N/2]} \nu_m \bigg[ g_{i\kappa}\Big( \frac{1}{\nu_m} \sum_{|j-j'|=m} d_{jj'} \Big) - g_{i\kappa}(\tilde{d}_{1,1+m}) \bigg]\,,
\end{equation}
where $\nu_m$ is the number of the distinct loop arcs between the points $y_j$ and $y_{j+m}$ for $j=1,\dots,n$, explicitly
$$ 
     \nu_m := \left\{ \begin{array}{ccl} N \quad & \dots & \quad m=1,\dots,\big[ \frac12(N-1)\big] \\[.3em]
     \frac12\,N \quad & \dots & \quad m= \frac12\,N \;\; \text{for}\;\; N\;\text{even} \end{array} \right.
$$ 
and in addition, the inequality in \eqref{1Djensen} is sharp unless all the summands in the argument are mutually equal. Finally, the said sum is easily computed,
$$ 
     \frac{1}{\nu_m} \sum_{|j-j'|=m} d_{jj'} = \frac{2\pi m}{N} = \tilde{d}_{1,1+m}\,,
$$ 
hence the right-hand side of \eqref{1Djensen} is zero which concludes the proof.
\end{proof}

\section{Point interactions on a circle} \label{s:circle}
\setcounter{equation}{0}

Let now $\CC$ be a circle of unit radius in $\R^\nu,\: \nu=2,3$, and consider a system of $N$ point interactions of equal strength placed at $\CC$. The assumed size of $\CC$ does not restrict the generality of our conclusions due to the scaling properties of the corresponding Hamiltonians. We denote again by $Y:=\{y_j\}_{j=1}^N \subset \CC$ the interaction support and use the symbol $H_{\alpha,Y}$ for the singular Schr\"odinger operators with point interactions in $L^2(\R^\nu)$. The way to construct them is well known \cite[Secs.~II.1 and II.4]{AGHH}: they are defined on function from $H^2(\R^\nu\setminus Y)$ acting there as the negative Laplacian, while at the points of $Y$ boundary conditions are imposed relating the generalized boundary values at each $y_j$, the coefficient at the singularity (logarithmic for $\nu=2$, first-order pole for $\nu=3$) and the next term in the
expansion. The relations are linear and the coefficient $\alpha\in\R$ appearing in them characterizes the coupling strength, however, it is not the usual coupling constant as seen from the fact that the point interaction absence corresponds formally to $\alpha=\infty$.

It is also well known that $\sigma_\mathrm{ess}(H_{\alpha,Y})= [0,\infty)$ and the existence of discrete spectrum depends on the dimension: $\sigma_\mathrm{disc}(H_{\alpha,Y})\ne\emptyset$ holds always if $\nu=2$ while for $\nu=3$ this requires $\alpha<\alpha_\mathrm{crit}^Y$ where superscript indicates that the value of the critical coupling depends on $Y$; we have $\alpha_\mathrm{crit}=0$ for a single point interaction, for $N=2$ already we have $\alpha_\mathrm{crit}>0$. Since our aim is again to find a configuration $Y$ that maximizes the lowest eigenvalue, in the three-dimensional situation we assume $\alpha<\alpha_\mathrm{crit}:= \inf_Y \alpha_\mathrm{crit}^Y$, and for brevity we also set $\alpha_\mathrm{crit}=\infty$ if $\nu=2$. The spectral problem can be again conveniently solved using Krein's formula which looks like \eqref{1Dkrein1} with the matrix \eqref{1Dkrein2} replaced by
\begin{equation} \label{nuDkrein}
     \Gamma_{\alpha,Y}(i\kappa) = \big[(\alpha-\xi_{i\kappa})\delta_{jj'} - (1-\delta_{jj'}) G_{i\kappa}(y_j-y_{j'})\big]_{j,j'=1}^N\,,
\end{equation}
where we have
 \begin{equation} \label{nuDgreen}
 G_{i\kappa}(y_j-y_{j'}) = \left\{ \begin{array}{ccc} \frac{1}{2\pi}
 K_0(\kappa|y_i-y_j|) &\quad\dots\quad& \nu=2 \\ [.3em]
 \frac{\ee^{-\kappa|y_i-y_j|}}{4\pi|y_i-y_j|}
 &\quad\dots\quad& \nu=3 \end{array} \right.
 \end{equation}
and the regularized value of Green's function at the interaction site is
 $$
 \xi_{i\kappa} = \left\{ \begin{array}{ccc} -\frac{1}{2\pi}
 \left(\ln\frac{\kappa}{2} +\gamma_\mathrm{E} \right)
 &\quad\dots\quad& \nu=2 \\ [.3em]
 -\frac{\kappa}{4\pi} &\quad\dots\quad& \nu=3 \end{array} \right.
 $$
where $\gamma_\mathrm{E} = -\psi(1) \approx 0.57721$ is the Euler-Mascheroni constant. The formula \eqref{nuDgreen} allows us to find the discrete spectrum of $H_{\alpha,Y}$ through solutions of the equation $\det \Gamma_{\alpha,Y}(i\kappa)=0$ and for the ground state eigenvalue we have the following result:
\begin{theorem}\label{th:nuDmaxim}
For any $N$-point set $Y\subset\CC$ and any $\alpha<\alpha_\mathrm{crit}$ we have
\begin{equation} \label{nuDmaxim}
     \lambda_1(\alpha,Y) \le \lambda_1(\alpha,\tilde{Y})\,,
\end{equation}
where $\tilde{Y}$ is the family of vertices of a regular $N$-polygon, and the inequality is sharp if and only if $Y$ and $\tilde{Y}$ are not congruent.
\end{theorem}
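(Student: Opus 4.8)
The plan is to follow the proof of Theorem~\ref{th:1Dmaxim} step by step, the only substantially new ingredient being a convexity property of the Green's function \eqref{nuDgreen} viewed as a function of the arc coordinate on $\CC$. By \eqref{nuDkrein} we may write $\Gamma_{\alpha,Y}(i\kappa)=(\alpha-\xi_{i\kappa})I-M_{i\kappa}(Y)$ with $M_{i\kappa}(Y):=\big[(1-\delta_{jj'})\,G_{i\kappa}(y_j-y_{j'})\big]_{j,j'=1}^N$; since the points of $Y$ are distinct this matrix has zero diagonal and strictly positive off-diagonal entries, hence is irreducible, and by the Perron--Frobenius theorem its largest eigenvalue $m_1(Y;\kappa)$ is simple with a strictly positive eigenvector. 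Consequently the smallest eigenvalue of $\Gamma_{\alpha,Y}(i\kappa)$ equals $(\alpha-\xi_{i\kappa})-m_1(Y;\kappa)$, is simple, and for $Y=\tilde Y$ its eigenvector is $\varphi_1:=\frac1{\sqrt N}(1,\dots,1)$ because then $M_{i\kappa}(\tilde Y)$ is circulant. As each $G_{i\kappa}(r)$ decreases strictly in $\kappa$, the map $\kappa\mapsto m_1(Y;\kappa)$ is strictly decreasing with $m_1(Y;\kappa)\to0$ as $\kappa\to\infty$, whereas $\kappa\mapsto\alpha-\xi_{i\kappa}$ is strictly increasing; under the hypothesis $\alpha<\alpha_\mathrm{crit}$ (void when $\nu=2$) these two curves cross at a unique $\kappa_1(Y)>0$, and since the remaining eigenvalues of $M_{i\kappa}(Y)$ lie below $m_1(Y;\kappa)$ this is the largest root of $\det\Gamma_{\alpha,Y}(i\kappa)=0$, so that $\lambda_1(\alpha,Y)=-\kappa_1(Y)^2$. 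Maximizing $\lambda_1(\alpha,Y)$ is therefore equivalent to \emph{minimizing} $\kappa_1(Y)$, and as $\kappa_1(Y)$ is the abscissa at which the fixed increasing curve meets the decreasing curve $m_1(Y;\cdot)$, it suffices to minimize $m_1(Y;\kappa)$ over $Y$ for each fixed $\kappa>0$.

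Estimating $m_1(Y;\kappa)$ from below by the Rayleigh quotient at $\varphi_1$ (cf.~\eqref{1Dineq}) gives $m_1(Y;\kappa)\ge(\varphi_1,M_{i\kappa}(Y)\varphi_1)=\frac1N\sum_{j\ne j'}G_{i\kappa}(y_j-y_{j'})$, and for $Y=\tilde Y$ this is an equality because $\varphi_1$ is the Perron eigenvector of the circulant $M_{i\kappa}(\tilde Y)$. Hence the theorem follows once we show that
$$
   \sum_{j\ne j'}G_{i\kappa}(y_j-y_{j'})\ \ge\ \sum_{j\ne j'}G_{i\kappa}(\tilde y_j-\tilde y_{j'})
$$
for every $\kappa>0$, with equality precisely when $Y\cong\tilde Y$: indeed this yields $m_1(Y;\kappa)>m_1(\tilde Y;\kappa)$ for all $\kappa$ whenever $Y\not\cong\tilde Y$, hence $\kappa_1(Y)>\kappa_1(\tilde Y)$ and $\lambda_1(\alpha,Y)<\lambda_1(\alpha,\tilde Y)$. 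To prove this distance inequality I would parametrize $Y$ by the arc gaps $t_1,\dots,t_N>0$, $\sum_k t_k=2\pi$; the chord from $y_j$ to $y_{j+m}$ (indices mod $N$) has length $2\sin\frac{\theta_{j,j+m}}2$ with $\theta_{j,j+m}=t_j+\cdots+t_{j+m-1}$, so that $\sum_{j\ne j'}G_{i\kappa}(y_j-y_{j'})=\sum_{m=1}^{N-1}\sum_{j=1}^N h_\kappa(\theta_{j,j+m})$, where $h_\kappa(\theta):=G_{i\kappa}\!\big(2\sin\tfrac\theta2\big)$. Since $\sum_{j=1}^N\theta_{j,j+m}=2\pi m$ for each $m$, Jensen's inequality applied to each inner sum gives $\sum_{j=1}^N h_\kappa(\theta_{j,j+m})\ge N\,h_\kappa(2\pi m/N)$, whose right-hand side is exactly the $\tilde Y$-contribution; summing over $m$ proves the inequality, and it is strict unless $\theta_{j,j+m}$ is independent of $j$ for every $m$, i.e.~unless all gaps coincide, i.e.~unless $Y\cong\tilde Y$.

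The single point demanding genuine care --- and the main obstacle --- is the convexity of $h_\kappa$: for the Jensen step to apply to an \emph{arbitrary} configuration, $h_\kappa$ must be strictly convex on all of $(0,2\pi)$, not merely near the origin where $G_{i\kappa}$ is singular, since for a general $Y$ the arcs $\theta_{j,j+m}$ may exceed $\pi$. I would establish this by writing $h_\kappa=\phi_\kappa\circ s$ with $s(\theta)=2\sin\tfrac\theta2$ strictly concave and increasing on $(0,\pi)$, and $\phi_\kappa(r)$ equal to $\frac1{2\pi}K_0(\kappa r)$ for $\nu=2$ and to $\frac{\ee^{-\kappa r}}{4\pi r}$ for $\nu=3$, in both cases strictly convex and strictly decreasing on $r>0$ --- for the Macdonald function one uses $K_0'=-K_1<0$ and $K_0''=\tfrac12(K_0+K_2)>0$. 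The chain rule then gives $h_\kappa''=\phi_\kappa''(s)(s')^2+\phi_\kappa'(s)\,s''>0$ on $(0,\pi)$, the second summand being positive since $\phi_\kappa'<0$ and $s''<0$, while $s'(\pi)=0$ makes $h_\kappa$ of class $C^1$ at $\theta=\pi$ with $h_\kappa'(\pi)=0$; together with the symmetry $h_\kappa(\theta)=h_\kappa(2\pi-\theta)$ this propagates strict convexity to all of $(0,2\pi)$. Once this lemma is in hand the argument closes exactly as above; the hypothesis $\alpha<\alpha_\mathrm{crit}$ is used only to guarantee that $\lambda_1(\alpha,Y)$ exists for every competitor $Y$, and the two-dimensional case is identical with $\alpha_\mathrm{crit}=\infty$.
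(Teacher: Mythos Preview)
Your argument is correct and follows the same overall scheme as the paper's proof: reduce the eigenvalue comparison to the Rayleigh quotient of $\Gamma_{\alpha,Y}(i\kappa)$ at the symmetric vector $\varphi_1$, and then establish the resulting potential-energy inequality by a Jensen-type argument.

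The difference lies in how the Jensen step is organized. The paper works in the chord variable: it first applies Jensen to the strictly convex function $g_{i\kappa}(\ell)$ to obtain the analogue of \eqref{nuDjensen}, then invokes the \emph{monotone decrease} of $g_{i\kappa}$ so that it suffices to prove the chord-mean inequality \eqref{nuDchords}, and finally proves \eqref{nuDchords} by a second Jensen application, this time to the \emph{concave} map $\phi\mapsto 2\sin\frac{\phi}{2}$. You instead compose the two maps from the outset, setting $h_\kappa(\theta)=g_{i\kappa}\!\big(2\sin\tfrac{\theta}{2}\big)$, check directly via the chain rule that $h_\kappa$ is strictly convex on all of $(0,2\pi)$, and apply Jensen once to the oriented arcs $\theta_{j,j+m}$, whose averages over $j$ are exactly $2\pi m/N$. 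The ingredients are the same (convexity and decrease of $g_{i\kappa}$, concavity of the sine), but your packaging is tighter: a single convexity lemma replaces the two-step ``convex $+$ decreasing $+$ concave'' chain, and working with oriented arcs on $(0,2\pi)$ rather than shortest arcs on $(0,\pi]$ sidesteps the issue that for an arbitrary $Y$ some $m$-step arcs may exceed~$\pi$, which the paper's claim $\tfrac12\phi_{jj'}\in(0,\tfrac{\pi}{2}]$ does not literally cover. Your Perron--Frobenius remark and the explicit monotonicity of $\kappa\mapsto m_1(Y;\kappa)$ are also a welcome sharpening of the ``Krein monotonicity'' step that the paper takes for granted.
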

\begin{proof}
The first part of the argument is analogous to the optimization of point interaction `necklaces' performed in \cite{Ex06}, hence we describe it only briefly. As in the previous section we seek the configuration maximizing the lowest eigenvalue of $\Gamma_{\alpha,Y}(i\kappa)$, in other words, we want to prove that
 \begin{equation} \label{Gammaineq}
 \mu_1(\alpha,Y) = \min \sigma(\Gamma_{\alpha,Y}(i\kappa)) < \min \sigma(\Gamma_{\alpha,\tilde{Y}}(i\kappa)) = \mu_1(\alpha,\tilde{Y})(i\kappa))
 \end{equation}
holds for all $\kappa>0$. We denote $g_{i\kappa}(\ell_{jj'}) := G_{i\kappa}(y_j-y_{j'})$ where the different symbol indicates that in contrast to \eqref{1Dineq} the distance is not measured over the perimeter of the circle but over the respective chord, i.e. as the Euclidean distance of the points $y_j$ and $y_{j'}$ in $\R^\nu$. The lowest eigenvalue of $H_{\alpha,Y}$ is simple and the corresponding eigenfunction can be chosen positive. Furthermore, for $Y=\tilde{Y}$ it has the symmetry with respect to the discrete group of rotations by multiples of the angle $\frac{2\pi}{N}$ which means that the eigenvector corresponding to $\mu_1(\alpha,\tilde{Y})(i\kappa)$ is again $\varphi_1= \frac{1}{\sqrt{N}}(1,\dots,1)$. In combination with minimax principle this allows us to write
\begin{equation} \label{nuDineq}
     \mu_1(\alpha,Y) \le (\varphi_1,\Gamma_{\alpha,Y} \varphi_1) = \alpha -\xi_{i\kappa} - \frac{1}{N} \sum_{j\ne j'} g_{i\kappa}(\ell_{jj'})\,.
\end{equation}
According to \eqref{nuDgreen} the function $g_{i\kappa}(\cdot)$ is strictly convex which implies an inequality analogous to \eqref{1Djensen}, namely
\begin{equation} \label{nuDjensen}
     F(\{\ell_{jj'}\}) \ge \sum_{m=1}^{[N/2]} \nu_m \bigg[ g_{i\kappa}\Big( \frac{1}{\nu_m} \sum_{|j-j'|=m} \ell_{jj'} \Big) - g_{i\kappa}(\tilde{\ell}_{1,1+m}) \bigg]\,,
\end{equation}
sharp unless $Y$ and $\tilde{Y}$ are congruent. In contrast to the previous section, however, we cannot hope that the right-hand side would vanish. Instead we want to prove that it is positive, or non-negative at worst. Since $g_{i\kappa}(\cdot)$ is monotonously decreasing in $(0,\infty)$, it is sufficient to check the inequalities
 \begin{equation} \label{nuDchords}
 \tilde\ell_{1,m+1} \ge \frac{1}{\nu_n} \sum_{|j-j'|=m} \ell_{jj'}
 \end{equation}
for $m=1,\dots,\big[\frac{N}{2}\big]$. Until now the argument followed closely the reasoning of \cite[Sec.~2]{Ex06}. The geometry of the problem there, however, is different from the present one. In order to prove inequality \eqref{nuDchords} for points on the circle we express the corresponding chord lengths in terms of their angular distances, $\ell_{jj'} = 2\sin \frac12\phi_{jj'}$ and $\tilde\ell_{1,m+1} =2\sin\frac{\pi m}{N}$. We note that $\frac12\phi_{jj'} \in \big(0,\frac{\pi}{2}\big]$ which allows to employ Jensen's inequality again, now for \emph{concave} functions, which yields
 $$
\sum_{|j-j'|=m} \frac{1}{\nu_m}\, 2\sin \frac12\phi_{jj'} \le 2\sin\Big(\sum_{|j-j'|=m} \frac{1}{\nu_m}\, \frac12\phi_{jj'} \Big) = 2\sin\frac{\pi m}{N} = \tilde\ell_{1,m+1}\,,
 $$
where the inequality is sharp unless all the angles $\phi_{j,j+m}$ are the same. This proves \eqref{nuDchords}, and \emph{eo ipso} the claim of the theorem.
\end{proof}

\begin{remark} \label{critcircle}
{\rm The secular equation $\det \Gamma_{\alpha,Y}(i\kappa)=0$ in combination with \eqref{nuDkrein} shows that the spectrum is monotonic with respect to $\alpha$, i.e. we have $H_{\alpha,Y}\le H_{\alpha',Y}$ for $\alpha<\alpha'$. This shows \emph{a posteriori} that $\alpha_\mathrm{crit} = \tilde\alpha_\mathrm{crit}$, the critical coupling for the fully symmetric set $\tilde{Y}$. Note that one can extend slightly the above reasoning to conclude that $\sigma_\mathrm{disc}(H_{\alpha_\mathrm{crit},Y}) \ne \emptyset$ holds for any $Y$ which is not congruent with $\tilde{Y}$.}
\end{remark}

\section{Point interactions on a sphere} \label{s:sphere}
\setcounter{equation}{0}

Let us now pass to a much harder problem and consider $N$ point interactions in $\R^3$ confined to the surface of a unit sphere $\cS$. We again denote the interaction support by $Y$ and ask for a configuration $\tilde{Y}$ which maximizes the ground state eigenvalue $\lambda_1(\alpha,Y)$ of $H_{\alpha,Y}$; the question makes sense if $\alpha<\alpha_\mathrm{crit}$. It is natural to expect that $\tilde{Y}$ will exhibit the maximum possible symmetry. The relation between $\lambda_1(\alpha,Y)$ and the lowest eigenvalue of $\Gamma_{\alpha,Y}(i\kappa)$ is the same as in the previous section, however, the other ingredient of the argument that allowed us to pass to \eqref{nuDineq}, namely the possibility to choose $\varphi_1= \frac{1}{\sqrt{N}}(1,\dots,1)$ as the eigenfunction corresponding to the said eigenvalue, can be justified in particular cases only.

Recall that in the circle case it was related to a particular symmetry, the invariance of $\tilde{Y}$ with respect to the group of discrete rotations by multiples of $\frac{2\pi}{N}$. For point on a sphere, such a symmetry is obvious for the smallest values, $N=2,3$, where the $\tilde{Y}$ consists of antipodal points and of three points equally spaced at the equator, respectively. For larger values the needed symmetry exists for $N=4,6,8,12,20$ if the points of $\tilde{Y}$ are identified with the vertices of one the five Platonic solids, tetrahedron, cube, octahedron, dodecahedron an icosahedron, respectively. In those cases we use \eqref{nuDineq} and ask about the maximizer of it right-hand side, in other words, to prove that
 \begin{equation} \label{3Dgreen_comp}
\sum_{j\ne j'} g_{i\kappa}(\tilde\ell_{jj'}) \le \sum_{j\ne j'} g_{i\kappa}(\ell_{jj'})
 \end{equation}
holds, sharply so if the two sets are not congruent, where $\tilde\ell_{jj'}$ and $\ell_{jj'}$ are again the Euclidean distances between the respective points of $\tilde{Y}$ and $Y$ and $g_{i\kappa}$ refers to the three-dimensional resolvent kernel \eqref{nuDgreen}.

\begin{theorem}\label{th:3Dmaxim}
Let the number of points of $Y$ be $N=2,3,4,6,12$ and $\alpha<\alpha_\mathrm{crit}$, then we have
\begin{equation} \label{3Dmaxim}
     \lambda_1(\alpha,Y) \le \lambda_1(\alpha,\tilde{Y})\,,
\end{equation}
where $\tilde{Y}$ is the corresponding fully symmetric set described above; the inequality is sharp if and only if $Y$ and $\tilde{Y}$ are not congruent.
\end{theorem}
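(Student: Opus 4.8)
The plan is to follow the scheme of the proof of Theorem~\ref{th:nuDmaxim}, the only genuinely new ingredient being a result borrowed from discrete geometry. For each of the five values $N=2,3,4,6,12$ the candidate set $\tilde Y$ — the antipodal pair, the equilateral triangle inscribed in a great circle, and the vertex sets of the regular tetrahedron, octahedron and icosahedron, respectively — is vertex-transitive, i.e.\ its symmetry group acts transitively on $\tilde Y$. Consequently the off-diagonal part of $\Gamma_{\alpha,\tilde Y}(i\kappa)$, a symmetric matrix with nonnegative entries $g_{i\kappa}(\tilde\ell_{jj'})$, has constant row sums, so its Perron vector is $\varphi_1=\frac{1}{\sqrt N}(1,\dots,1)$ and this is the eigenvector of $\Gamma_{\alpha,\tilde Y}(i\kappa)$ associated with its lowest eigenvalue $\mu_1(\alpha,\tilde Y)$. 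Together with the minimax principle applied to $\Gamma_{\alpha,Y}(i\kappa)$ this yields \eqref{nuDineq}, and, exactly as in Sections~\ref{s:1Dloop} and~\ref{s:circle}, the monotonicity of $\mu_1$ in the spectral parameter reduces \eqref{3Dmaxim} to the energy comparison \eqref{3Dgreen_comp}: it is enough to prove that $\tilde Y$ minimizes $E_\kappa(Y):=\sum_{j\ne j'}g_{i\kappa}(\ell_{jj'})$ over all $N$-point subsets of $\cS$, for every $\kappa>0$, and does so strictly unless $Y$ is congruent with $\tilde Y$.

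The key structural observation is that, with the squared chord length written as $t=\ell^2\in(0,4]$, the three-dimensional kernel in \eqref{nuDgreen} reads $g_{i\kappa}(\ell)=h_\kappa(\ell^2)$ with $h_\kappa(t)=\frac{1}{4\pi}\,t^{-1/2}\ee^{-\kappa\sqrt t}$, and $h_\kappa$ is a \emph{strictly completely monotone} function of $t$ on $(0,\infty)$. Indeed $t\mapsto t^{-1/2}$ is completely monotone, and so is $t\mapsto\ee^{-\kappa\sqrt t}$, being the Laplace transform of the nonnegative density $s\mapsto\frac{\kappa}{2\sqrt\pi}\,s^{-3/2}\ee^{-\kappa^2/(4s)}$ of the one-sided $\tfrac12$-stable law; a product of completely monotone functions is completely monotone, and $h_\kappa$ is plainly non-constant. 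This is precisely the class of potentials for which Cohn and Kumar~\cite{CK07} proved the \emph{universal optimality} of certain point configurations, and their classification shows that on the two-sphere the universally optimal configurations are exactly the ``sharp'' ones occurring for $N=2,3,4,6,12$, namely our $\tilde Y$; moreover, for a strictly completely monotone potential such a configuration is the unique energy minimizer up to congruence. Applying this with $h=h_\kappa$ gives \eqref{3Dgreen_comp} with the asserted sharpness, and hence the theorem.

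For the three smallest cases one can dispense with the heavy input. Using the elementary identity $\sum_{j\ne j'}\ell_{jj'}^2=2N^2-2\big|\sum_j y_j\big|^2\le 2N^2$, valid for any $Y\subset\cS$, together with Jensen's inequality for the convex decreasing function $h_\kappa$, one gets $E_\kappa(Y)\ge N(N-1)\,h_\kappa\big(\tfrac{2N}{N-1}\big)$, and for $N=2,3,4$ this lower bound is attained exactly by the antipodal pair, the regular triangle and the regular tetrahedron, respectively — all of whose mutual squared distances equal $\tfrac{2N}{N-1}$ — with equality forcing congruence with $\tilde Y$. For $N=6$ and $N=12$ the octahedron and the icosahedron have two, resp.\ three distinct distances, so Jensen is no longer tight at $\tilde Y$ and this argument breaks down; one then really has to fall back on the Cohn--Kumar theorem, whose proof rests on linear programming bounds and the theory of spherical designs. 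That deep input is the main obstacle: it cannot be replaced by a short calculation, and the same circle of ideas explains why the remaining Platonic values $N=8$ (cube) and $N=20$ (dodecahedron) are left open — these configurations are \emph{not} universally optimal, so for them \eqref{3Dgreen_comp} may well fail, at least for some $\kappa$.
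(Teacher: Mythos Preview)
Your proof is correct and follows essentially the same route as the paper's: reduce \eqref{3Dmaxim} via the symmetry-forced eigenvector $\varphi_1$ and the monotone Krein relation to the energy inequality \eqref{3Dgreen_comp}, then observe that $h_\kappa(t)=g_{i\kappa}(\sqrt t)$ is strictly completely monotone and invoke the Cohn--Kumar universal optimality of the five sharp configurations on $\cS$. Your additional Jensen argument for the simplex cases $N=2,3,4$ is a pleasant bonus not present in the paper, but since you still need~\cite{CK07} for $N=6,12$, the overall strategy is the same.
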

\begin{proof}
To begin with, we observe that the task is of the type of \emph{Thomson problem} \cite{Th04} about an optimal distribution of $N$ charges on the surface of a sphere, instead of the Coulomb potential $(y_j,y_{j'}) \mapsto e|y_j-y_{j'}|^{-1}$ we have here the function $g_{i\kappa}$. Despite the fact that the plum-pudding model from which it came proved soon to be physically untenable, the question posed to a hard mathematical challenge, still far from being fully solved more than a century after it was formulated \cite{Twiki}. To illustrate this claim, recall that the problem was included in the `new Hilbert problems' for the present century \cite{Sm98}, and that a (computer assisted) proof for $N=5$ was found only recently \cite{Sch13}.
Furthermore, the original problem inspired a vast activity in algebraic combinatorics, see \cite{CK07}, \cite{BB09} and references therein, where the analogous question is asked in other dimensions, for other manifolds, and for other `potentials'.

We employ one of those generalizations. Given an $N$-point set $Y\subset\cS$ we call it an \emph{$M$-spherical design} if for any polynomial $x\mapsto p(x)$ on $\R^3$ of total degree at most $M$ one can replace its average over the sphere by the average over $Y$, in other words, $\int_{\cS} p(x)\mathrm{d}x = \frac{1}{N} \sum_{j=1}^N p(y_j)$ holds. Let further $m$ be the number of \emph{different} inner products between distinct points of $Y$, then the set is a \emph{sharp configuration} if it is a $2m\!-\!1$ spherical design. The deep result of Cohen and Kumar \cite[Thm.~1.2]{CK07} says  that sharp configurations are \emph{universally optimal} meaning that they minimize \emph{any} potential energy $f:[0,4]\to\R$ which is \emph{completely monotonic}, $(-1)^k  f^{(k)} \geq 0$ for all $k\ge 0$, i.e.
 $$
\sum_{j\ne j'} f(\tilde\ell_{jj'}^{\,2}) \le \sum_{j\ne j'} f(\ell_{jj'}^2)\,,
 $$
sharply so if the complete monotonicity is strict.

The function $g_{i\kappa}(\cdot)$ factorizes into a product of two strictly completely monotonic functions, hence it is also strictly completely monotonic, and it is easy to check that the same is true for $g_{i\kappa} \big(\sqrt{\cdot})\big)$. This means that \eqref{3Dgreen_comp} is valid, sharply unless $Y$ and $\tilde{Y}$ are congruent, for any sharp configuration in $\R^3$. According to \cite[Table~1]{CK07} there are five such cases:
\setlength{\itemsep}{-1.5pt}
\begin{description}
\setlength{\itemsep}{-1.5pt}
\item $\;$ -- three \emph{simplices} with $N=2$ (a pair antipodal points, the inner product $-1$), $N=3$ (equilateral triangle, the inner product $-\frac12$), and $N=4$ (tetrahedron, the inner product $-\frac13$)
\item $\;$ -- \emph{octahedron} with $N=6$, in other words, three-dimensional cross polytope with the inner products $-1,0$
\item $\;$ -- \emph{icosahedron}, $N=12$, with the inner products $1,\,\pm\frac{1}{\sqrt{5}}$
\end{description}
These are exactly the configurations listed in the theorem which concludes the proof.
\end{proof}

\begin{remarks} \label{remarks}
{\rm (a) The configurations referring to the remaining Platonic solids, cube and dodekahedron with the number of the different inner products $m=3$ and $m=4$, respectively, do not qualify for universality; recall there are neither solutions of the Thomson problem \cite{Twiki}. \\[.2em]
(b) As in Remark~\ref{critcircle} one can check that $\sigma_\mathrm{disc}(H_{\alpha_\mathrm{crit},Y}) \ne \emptyset$ holds for $N=2,3,4,6,12$ and any $Y$ not congruent with the corresponding sharp configuration. \\[.2em]
(c) The optimal configuration listed in the theorem are independent of $\alpha$. This may not be true for other values of $N$ because, in contrast to the Coulomb potential of Thomson's problem, the coupling constant does not enter the inequality as an overall multiplier which could be factorized out. }
\end{remarks}

\section{Back to one dimension: repulsive interactions} \label{s:repulsive}
\setcounter{equation}{0}

While two- and three-dimensional point interactions are always attractive, even if in the latter case it may not be sufficient to produce a discrete spectrum, in one dimension the considerations of Sec.~\ref{s:1Dloop} cover only a part of the problem and one asks what happens if we have $\alpha>0$. The spectrum is again determined by the secular equation which is now $\det \Gamma_{\alpha,Y}(k)=0$ where the resolvent kernel \eqref{1Dgreen} in \eqref{1Dkrein2} is replaced by
\begin{equation} \label{1Dgreen+}
     G_k(x-y) = - \frac{\cos(k(\pi-|x-y|)}{2k \sin \pi k}
\end{equation}
with $k>0$. The spectral threshold $\lambda_1(\alpha,Y)$ is positive and we are again interested in the configuration $\tilde{Y}$ that optimizes it.

In analogy with the modifications of Faber-Krahn inequality for Robin billiards \cite{Ba77, FK15} one might expect that the nature of the optimizer would get switched as $\alpha$ passes through zero, but as we will see it is not the case. What is important, we cannot use the reasoning of the previous sections, for two reasons. One is that that the function \eqref{1Dgreen+} is not convex in general, more exactly it has this property in $(-\pi,\pi)$ only if $k<\frac12$, however, depending on the value of the coupling $\alpha$ the ground state $\lambda_1(\alpha,Y)$ may correspond to any number in $(0,\frac12 N)$. What is even more important, $\lambda_1(\alpha,Y)$ no longer corresponds to the lowest eigenvalue of $\Gamma_{\alpha,Y}(k)$ as the latter has now negative eigenvalues which give rise to higher eigenvalues of $H_{\alpha,Y}$ for $\alpha<0$.

Using a perturbative argument, one can show that the character of the optimizer remains preserved for weak repulsive soupling.

\begin{theorem}\label{thm:1Dweak}
In the same notation as above, we have $\lambda_1(\alpha,Y) \le \lambda_1(\alpha,\tilde{Y})$ for any $N$-point set $Y$ and all $|\alpha|$ small enough, and the inequality is sharp unless $Y$ and $\tilde{Y}$ are congruent.
\end{theorem}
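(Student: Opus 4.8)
The plan is to treat $\alpha$ as a small parameter and expand $\lambda_1(\alpha,Y)$ around $\alpha=0$ by Rayleigh--Schr\"odinger perturbation theory. One finds that the first-order coefficient is the same for all configurations, so that the comparison is decided at second order, where it reduces to exactly the convexity estimate used in the proof of Theorem~\ref{th:1Dmaxim}; a compactness argument exploiting the non-degeneracy of the symmetric configuration then upgrades the leading-order conclusion to an inequality valid for all $|\alpha|$ small enough. (For $\alpha<0$ the claim is of course already contained in Theorem~\ref{th:1Dmaxim}, so the new content concerns small $\alpha>0$, but the argument below works for either sign.)

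First I would record that $H_{\alpha,Y}=H_0\dotplus\alpha V$ holds in the form sense with $V=\sum_{n=1}^N\delta(\cdot-y_n)$; in one dimension $V$ is form-bounded with respect to $H_0$ with relative bound zero, and the estimate is \emph{uniform in} $Y$ --- it follows from the Sobolev trace inequality on the fixed interval $I$ and survives collisions of the points. Since the unperturbed ground state $\lambda_1(0,Y)=0$ is simple with the $Y$-independent gap $1$ to the rest of $\sigma(H_0)$, this is an analytic family of type~(B) and $\lambda_1(\alpha,Y)=\sum_{k\ge1}\alpha^k c_k(Y)$ converges for $|\alpha|$ below a $Y$-independent radius, the coefficients $c_k$ being continuous in $Y$ (also across collisions) and uniformly bounded. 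Using the eigenbasis $\eta_m(x)=\frac{1}{\sqrt{2\pi}}\,\ee^{imx}$, $m\in\Z$, of $H_0$ one gets $c_1(Y)=\langle\eta_0,V\eta_0\rangle=\frac{N}{2\pi}$, \emph{independent of} $Y$, and
$$
     c_2(Y)=-\sum_{m\ne0}\frac{|\langle\eta_m,V\eta_0\rangle|^2}{m^2}=-\frac{1}{4\pi^2}\sum_{j,j'=1}^N h(d_{jj'}),\qquad h(t):=\sum_{m\ne0}\frac{\ee^{-imt}}{m^2},
$$
with $d_{jj'}=d(y_j,y_{j'})\in[0,\pi]$ the loop distance and $h(t)=\frac{\pi^2}{3}-\pi|t|+\frac{t^2}{2}$ for $|t|\le2\pi$; in particular $h$ is strictly decreasing and strictly convex on $(0,\pi)$, exactly the property of $g_{i\kappa}$ used in Section~\ref{s:1Dloop}, and (up to a positive factor and the subtraction of its singular part) it coincides with the $\kappa\to0$ limit of the regularized free resolvent kernel.

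Since $c_1$ does not depend on $Y$, maximizing $\lambda_1(\alpha,Y)$ for small $\alpha$ amounts, to leading order, to maximizing $c_2(Y)$, i.e. to \emph{minimizing} $\sum_{j\ne j'}h(d_{jj'})$. This is precisely the extremal problem treated in the proof of Theorem~\ref{th:1Dmaxim} with $g_{i\kappa}$ replaced by $h$: grouping the pairs according to $|j-j'|=m$, applying Jensen's inequality on each group by strict convexity of $h$, and using $\frac{1}{\nu_m}\sum_{|j-j'|=m}d_{jj'}=\frac{2\pi m}{N}=\tilde d_{1,1+m}$, one obtains $\sum_{j\ne j'}h(d_{jj'})\ge\sum_{j\ne j'}h(\tilde d_{jj'})$ with equality iff $Y$ is congruent with $\tilde Y$. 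Hence $c_2(Y)\le c_2(\tilde Y)$, strictly unless $Y\cong\tilde Y$. Moreover, in gap coordinates $g_i>0$, $\sum_i g_i=2\pi$ (which already factor out the rotational symmetry), the nearest-neighbour contribution $\sum_j h(g_j)$ alone has Hessian equal to the identity on the tangent hyperplane, so $\tilde Y$ is a \emph{non-degenerate} maximizer of $c_2$.

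The hard part --- and, I expect, the only genuinely delicate step --- is to pass from this second-order statement to the claim for \emph{all} small $|\alpha|$, uniformly over competitors, in particular over configurations lying arbitrarily close to $\tilde Y$, where $c_2(\tilde Y)-c_2(Y)$ is only quadratically small and could a priori be overwhelmed by the $O(\alpha^3)$ tail. I would argue on the compact set $\overline\Delta$ of gap coordinates as follows. By the dihedral symmetry of $\tilde Y$ one has $\nabla_g\lambda_1(\alpha,\tilde Y)=0$ for every $\alpha$, and since the quadratic term dominates, $\mathrm{Hess}_g\lambda_1(\alpha,\tilde Y)=\alpha^2\,\mathrm{Hess}_g c_2(\tilde Y)+O(\alpha^3)$ is negative definite on the tangent hyperplane for all sufficiently small $\alpha\ne0$. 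If some maximizer $Y_\alpha$ of $\lambda_1(\alpha,\cdot)$ over $\overline\Delta$ were not congruent with $\tilde Y$, then cancelling the $Y$-independent first-order term and dividing by $\alpha^2$ would give $c_2(\tilde Y)-c_2(Y_\alpha)\le C|\alpha|$ with $C$ a uniform bound on the series tail; by continuity, compactness and the strictness of the maximum of $c_2$ this forces $Y_\alpha\to\tilde Y$ as $\alpha\to0$. Taylor-expanding $\lambda_1(\alpha,\cdot)$ about the critical point $\tilde Y$ and combining the negative-definite Hessian with the non-degeneracy bound $c_2(\tilde Y)-c_2(Y)\ge c\,\mathrm{dist}(Y,\tilde Y)^2$ then yields
$$
     0\le\lambda_1(\alpha,Y_\alpha)-\lambda_1(\alpha,\tilde Y)\le\alpha^2\,\mathrm{dist}(Y_\alpha,\tilde Y)^2\bigl(-c+O(\mathrm{dist}(Y_\alpha,\tilde Y))+O(\alpha)\bigr)<0
$$
once $\alpha$ and $\mathrm{dist}(Y_\alpha,\tilde Y)$ are small enough --- a contradiction. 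Thus for all small $|\alpha|$ the maximizer is $\tilde Y$, unique up to congruence.
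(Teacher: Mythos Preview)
Your argument and the paper's coincide at the core: both expand $\lambda_1(\alpha,Y)$ in $\alpha$, observe that $c_1=\frac{N}{2\pi}$ is configuration--independent, and reduce the comparison to showing that $c_2$ is strictly maximized at $\tilde Y$. Your treatment of $c_2$ via the kernel $h(t)=\frac{\pi^2}{3}-\pi|t|+\frac{t^2}{2}$ and Jensen's inequality is equivalent to the paper's route through the Bernoulli polynomial $B_2$; indeed $h$ \emph{is} $2\pi^2 B_2$ in disguise, so the two computations are the same up to bookkeeping.

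Where you diverge is in what you prove. The paper stops at $c_2(Y)<c_2(\tilde Y)$ and (implicitly) reads the theorem with a $Y$--dependent smallness threshold on $\alpha$; you push for a \emph{uniform} threshold via the compactness/non-degeneracy bootstrap. That extra work is sound, and the point that makes it close is worth stating explicitly: since $c_1$ is constant in $Y$, every spatial derivative of $\lambda_1(\alpha,\cdot)$ of order $\ge 1$ is $O(\alpha^2)$ uniformly, so the cubic Taylor remainder at $\tilde Y$ is $O(\alpha^2)\,\mathrm{dist}^3$ rather than the generic $O(1)\,\mathrm{dist}^3$. This is exactly what lets you factor out $\alpha^2\,\mathrm{dist}^2$ in your final display. (In fact your own formula shows $h$ is quadratic on $(0,2\pi)$, hence $c_2$ is an exact quadratic in the gap coordinates and $\partial_g^3 c_2\equiv 0$, which makes the remainder even $O(\alpha^3)\,\mathrm{dist}^3$; either bound suffices.) With that observation the compactness step becomes unnecessary: the inequality $\lambda_1(\alpha,Y)-\lambda_1(\alpha,\tilde Y)\le \alpha^2\,\mathrm{dist}^2\bigl(-c+O(|\alpha|)+O(|\alpha|)\,\mathrm{dist}\bigr)$ is negative for all $Y\not\cong\tilde Y$ once $|\alpha|$ is below a fixed threshold, since $\mathrm{dist}$ is bounded on the simplex. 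So your proof is correct and in fact yields a stronger, uniform statement than the paper's argument establishes.
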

\begin{proof}
We neglect the trivial case of $\alpha=0$ and regard $H_{\alpha,Y}$ as a perturbation of $H_0\equiv H_{0,Y}$ by $V(x)= \alpha \sum_{n=1}^N \delta(x-y_n)$. The spectrum of $H_0$ has been mentioned above, the eigenvalues $m^2$ correspond to the eigenfunctions $\eta_m(x) = \frac{1}{\sqrt{2\pi}} \ee^{imx},\: m\in\Z$. The analytic perturbation theory yields the expansion
\begin{equation} \label{1Dpert}
     \lambda_1(\alpha,Y) = c_1^Y \alpha + c_2^Y \alpha^2 + \OO(\alpha^3)\,,
\end{equation}
where the first-order coefficient $c_1^Y=(\eta_0,V\eta_0) = \frac{N}{2\pi}$ is independent of $Y$ and the second-order one is
\begin{equation} \label{1Dsecond}
     c_2^Y = \sum_{m\ne 0} \frac{|(\eta_m,V\eta_0)|^2}{-m^2} = -\frac{1}{2\pi^2} \sum_{m=1}^\infty \frac{1}{m^2}\,\Big| \sum_{j=1}^N \ee^{imy_j} \Big|^2\,.
\end{equation}
The series obviously converges. Some coefficients may vanish, for instance, if $Y=\tilde{Y}$ and $m=1$ according to \cite[4.4.1.5]{PBM}, however, we will see that some -- in fact many -- are nonzero so the sum is positive.

Our goal is to show that the symmetric configuration $\tilde{Y}$ sharply maximizes the right-hand side of \eqref{1Dsecond}, that is, it minimizes the positive value of the sum appearing there. We rewrite the latter as
$$ 
     \sum_{m=1}^\infty \frac{1}{m^2}\, \sum_{j,j'=1}^N \ee^{im(y_j-y_{j'})} = \sum_{m=1}^\infty \frac{1}{m^2}\, \sum_{j,j'=1}^N \cos(m(y_j-y_{j'}))
$$ 
because the contributions to the imaginary part cancel mutually. We change the order of summation and evaluate the inner series,
$$ 
      \sum_{j,j'=1}^N \sum_{m=1}^\infty \frac{\cos(m(y_j-y_{j'}))}{m^2} = \sum_{j,j'=1}^N \pi^2 B_2\Big(\frac{y_j-y_{j'}}{2\pi}\Big)\,,
$$ 
cf.~\cite[5.4.2.7]{PBM}, where $B_2$ is the Bernoulli polynomial of order two, or explicitly
\begin{align*} \label{}
     \sum_{m=1}^\infty \frac{1}{m^2}\,\Big| \sum_{j=1}^N \ee^{imy_j} \Big|^2 &= \pi^2 \sum_{j,j'=1}^N \Big[ (y_j-y_{j'})^2 - (y_j-y_{j'}) + \frac16 \Big] \\ &= \frac16 \pi^2N^2 + 2\pi^3(N-1) + \pi^2 \sum_{j,j'} (y_j-y_{j'})^2.
\end{align*}
The last term is the only one which depends on the configuration $Y$; using convexity of the quadratic function we can estimate the sum there as follows
\begin{align*} \label{}
     \sum_{j,j'} (y_j-y_{j'})^2 &= \sum_{l=1}^{N-1} \sum_{j=1}^N \big(y_j-y_{j+l\,(\mathrm{mod}\,N)}\big)^2 \ge \sum_{l=1}^{N-1} \Big(\sum_{j=1}^N \big(y_j-y_{j+l\,(\mathrm{mod}\,N)}\big) \Big)^2 \\ &= \frac{1}{N}\,\sum_{l=1}^{N-1} (2\pi l)^2 = \frac23 \pi^2 (N-1)(2N-1) = \sum_{j,j'} (\tilde{y}_j -\tilde{y}_{j'})^2
\end{align*}
with the sharp inequality if $Y$ and $\tilde{Y}$ are not congruent, which concludes the proof.
\end{proof}

Furthermore, a similar result is also valid for strongly repulsive point interactions. In order to demonstrate that, let us introduce the Dirichlet operator $H_{\mathrm{D},Y}$ corresponding to the quadratic form
\begin{equation} \label{1Dirform}
     q_{\mathrm{D},Y}:\; q_{\mathrm{D},Y}(\psi) = \|\psi'\|^2\,, \quad D(q_{\mathrm{D},Y}) = H^1(I\setminus Y)\,,
\end{equation}
in other words, $H_{\alpha,\mathrm{D}}$ acts as the Laplacian with Dirichlet conditions imposed at the points of $Y$. It is straightforward to check that $H_{\alpha,Y} \le H_{\mathrm{D},Y}$, hence, in particular, $\lambda_1(\alpha,Y) \le \lambda_1(\mathrm{D},Y)$ holds by minimax principle. A similar argument using a comparison of quadratic forms \eqref{1Dhamform} for different values of $\alpha$ shows that $\lambda_1(\cdot,Y)$ is an increasing function and we have
\begin{equation} \label{Dirlim}
     \lim_{\alpha\to+\infty} \lambda_1(\alpha,Y) = \lambda_1(\mathrm{D},Y)\,.
\end{equation}

\begin{theorem}\label{thm:1Dstrong}
Suppose that $Y$ and $\tilde{Y}$ are not congruent, then $\lambda_1(\alpha,Y) < \lambda_1(\alpha,\tilde{Y})$ holds for all $\alpha$ large enough.
\end{theorem}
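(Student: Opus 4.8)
The plan is to reduce the statement to the strong-coupling limit. By the monotonicity of $\lambda_1(\cdot,Y)$ and the convergence \eqref{Dirlim}, passing $\alpha\to+\infty$ turns the ground state eigenvalue into the Dirichlet value $\lambda_1(\mathrm{D},Y)$; hence it is enough to show first that $\lambda_1(\mathrm{D},Y)<\lambda_1(\mathrm{D},\tilde Y)$ whenever $Y$ is not congruent with $\tilde Y$, and then to transfer this strict ordering of the limits to all sufficiently large finite $\alpha$ by a soft continuity argument.

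First I would compute $\lambda_1(\mathrm{D},Y)$ explicitly. The points of $Y$ divide the loop $I$ into $N$ arcs of lengths $\ell_1,\dots,\ell_N$ with $\sum_{j=1}^N\ell_j=2\pi$, and the form \eqref{1Dirform} shows that $H_{\mathrm{D},Y}$ is the orthogonal sum of the Dirichlet Laplacians on these arcs. Since the lowest Dirichlet eigenvalue on an interval of length $\ell$ equals $(\pi/\ell)^2$, we obtain $\lambda_1(\mathrm{D},Y)=\big(\pi/\ell_{\max}\big)^2$ with $\ell_{\max}:=\max_{1\le j\le N}\ell_j$. The constraint $\sum_j\ell_j=2\pi$ forces $\ell_{\max}\ge 2\pi/N$, with equality precisely when all the arcs have the same length, i.e. precisely when $Y$ is congruent with $\tilde Y$. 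Consequently, for $Y$ not congruent with $\tilde Y$ we get the \emph{strict} inequality $\lambda_1(\mathrm{D},Y)<(N/2)^2=\lambda_1(\mathrm{D},\tilde Y)$.

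It remains to descend from the limits to finite couplings. The comparison $H_{\alpha,Y}\le H_{\mathrm{D},Y}$ noted above gives, via the minimax principle, $\lambda_1(\alpha,Y)\le\lambda_1(\mathrm{D},Y)$ for every $\alpha$. Put $\delta:=\tfrac12\big(\lambda_1(\mathrm{D},\tilde Y)-\lambda_1(\mathrm{D},Y)\big)>0$; applying \eqref{Dirlim} to the configuration $\tilde Y$ we find an $\alpha_0$ such that $\lambda_1(\alpha,\tilde Y)>\lambda_1(\mathrm{D},\tilde Y)-\delta$ for all $\alpha>\alpha_0$. For such $\alpha$,
$$
\lambda_1(\alpha,\tilde Y)>\lambda_1(\mathrm{D},\tilde Y)-\delta=\lambda_1(\mathrm{D},Y)+\delta>\lambda_1(\mathrm{D},Y)\ge\lambda_1(\alpha,Y)\,,
$$
which is exactly the asserted strict inequality.

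The argument is essentially soft once \eqref{Dirlim} is in hand, so I do not anticipate a genuine obstacle; the only points to be careful about are that \eqref{Dirlim} is invoked only for the two fixed configurations $Y$ and $\tilde Y$ (no uniformity over configurations is needed) and that the resulting threshold $\alpha_0$ depends on $N$ and on $Y$. Removing this dependence, i.e. proving $\lambda_1(\alpha,Y)\le\lambda_1(\alpha,\tilde Y)$ for \emph{all} $\alpha>0$ — which together with Theorem~\ref{thm:1Dweak} would close the gap left by the present partial results — is beyond the reach of this method and remains the conjecture announced in the introduction.
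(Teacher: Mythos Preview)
Your argument is correct and in fact more economical than the paper's. Both proofs begin identically by computing $\lambda_1(\mathrm{D},Y)=(\pi/\ell_{\max})^2$ and observing that $\ell_{\max}>2\pi/N$ forces the strict Dirichlet inequality $\lambda_1(\mathrm{D},Y)<\lambda_1(\mathrm{D},\tilde Y)=\tfrac14 N^2$. The difference lies in how the strict gap at $\alpha=\infty$ is transferred to finite $\alpha$: the paper invokes real analyticity of the eigenvalues of $\Gamma_{\alpha,Y}(k)$ together with the implicit function theorem to obtain the expansions $\lambda_1(\alpha,Y)=\lambda_1(\mathrm{D},Y)-c\alpha^{-1}+\OO(\alpha^{-2})$ and similarly for $\tilde Y$, and then compares the leading terms. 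You bypass this entirely by combining the \emph{uniform} upper bound $\lambda_1(\alpha,Y)\le\lambda_1(\mathrm{D},Y)$ with the mere \emph{convergence} $\lambda_1(\alpha,\tilde Y)\to\lambda_1(\mathrm{D},\tilde Y)$, which is all that is needed. Your route is softer and avoids the analyticity machinery altogether; the paper's route, on the other hand, yields the extra information that the approach to the Dirichlet limit is of order $\alpha^{-1}$, which is not required for the theorem but is of independent interest.
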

\begin{proof}
The optimization problem for $H_{\alpha,\mathrm{D}}$ is easy to solve because we know the spectrum explicitly, in particular, we have $\lambda_1(\mathrm{D},\tilde{Y}) = \frac14 N^2$ and
\begin{equation} \label{diropt}
 \lambda_1(\mathrm{D},Y) = \bigg( \frac{\pi}{\max\big\{d_{j,j+1\,(\mathrm{mod}\,N)} \big\}} \bigg)^2 < \lambda_1(\mathrm{D},\tilde{Y})
\end{equation}
by assumption. The eigenvalues of $\Gamma_{\alpha,Y}(k)$ entering the secular equation $\det \Gamma_{\alpha,Y}(k)=0$ are real analytic functions of $k$. Combining this fact with the implicit function theorem and \eqref{Dirlim} we infer that
$$ 
      \lambda_1(\alpha,Y) = \lambda_1(\mathrm{D},Y) - c\alpha^{-1} + \OO(\alpha^{-2}) \quad \text{and} \quad \lambda_1(\alpha,\tilde{Y}) = \lambda_1(\mathrm{D},\tilde{Y}) - \tilde{c}\alpha^{-1} + \OO(\alpha^{-2})
$$ 
for some $c,\,\tilde{c}>0$ as $\alpha\to+\infty$; this in combination with \eqref{diropt} yields the sought claim.
\end{proof}

Theorems~\ref{thm:1Dweak} and \ref{thm:1Dstrong} motivate us to the following guess:

\begin{conjecture}\label{1Dconj}
In the described setting, $\lambda_1(\alpha,Y) < \lambda_1(\alpha,\tilde{Y})$ holds for any $\alpha\in(0,\infty)$ unless the $N$-point configurations $Y$ and $\tilde{Y}$ are congruent.
\end{conjecture}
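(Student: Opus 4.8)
The plan is to discard the Green's‑function matrix $\Gamma_{\alpha,Y}(k)$ altogether and to read the eigenvalue problem off a Jacobi matrix obtained by solving the Helmholtz equation separately on each arc; both obstructions noted above then evaporate. Writing $y_1<\cdots<y_N$ and introducing the cyclic gaps $g_j:=y_{j+1}-y_j$ (with $y_{N+1}:=y_1+2\pi$), so that $\sum_{j=1}^N g_j=2\pi$ and $g_{\max}:=\max_j g_j\ge 2\pi/N$, we have $0<\lambda_1(\alpha,Y)<\lambda_1(\mathrm{D},Y)=(\pi/g_{\max})^2$ by $H_{\alpha,Y}\le H_{\mathrm{D},Y}$ and \eqref{diropt}. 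Hence, writing $\lambda_1(\alpha,Y)=k^2$ with $0<k<\pi/g_{\max}$, we get $\sin(kg_j)>0$ for each $j$, and the strictly positive ground state equals $v_j\cos\bigl(k(\cdot-y_j)\bigr)+b_j\sin\bigl(k(\cdot-y_j)\bigr)$ on the $j$-th arc. Eliminating the $b_j$ through continuity at the interaction points and inserting this into the jump conditions \eqref{1Ddelta} shows that $(v_j)_{j=1}^N$ lies in the kernel of the real symmetric cyclic Jacobi matrix $M_Y(k)$ with diagonal entries $-\alpha-k\cot(kg_j)-k\cot(kg_{j-1})$ and with the entry coupling the $j$-th and $(j+1)$-th sites equal to $k/\sin(kg_j)$ (indices modulo $N$; for $N=2$ the two arcs joining the two sites contribute additively). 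For $k\in(0,\pi/g_{\max})$ all off‑diagonal entries of $M_Y(k)$ are positive and $M_Y(k)$ is irreducible, so by Perron--Frobenius its largest eigenvalue $\mu_Y(k)$ is simple with a strictly positive eigenvector; moreover $\mu_Y(\cdot)$ is continuous, tends to $-\alpha<0$ as $k\to 0^{+}$, and $\det M_Y(k)=0$ holds for $k\in(0,\pi/g_{\max})$ precisely when $k^2$ is an eigenvalue of $H_{\alpha,Y}$, the ground state sitting at the smallest such $k$.

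Granting this dictionary, the comparison is a one‑line convexity estimate, in complete parallel with the proof of Theorem~\ref{th:1Dmaxim} but with $\tan$ on $\bigl(0,\tfrac{\pi}{2}\bigr)$ in the role of the Green's function there on $(0,\pi)$. For the equidistant set $\tilde Y$ the matrix $M_{\tilde Y}(k)$ is circulant, its largest eigenvalue is attained at $\varphi_1:=N^{-1/2}(1,\dots,1)$, and a short computation gives $\mu_{\tilde Y}(k)=2k\tan(\pi k/N)-\alpha$; in particular $\tilde k:=\sqrt{\lambda_1(\alpha,\tilde Y)}$ is the unique root of $2k\tan(\pi k/N)=\alpha$ in $(0,N/2)$. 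I split the argument in two. If $\pi/g_{\max}\le\tilde k$, then already $\lambda_1(\alpha,Y)<(\pi/g_{\max})^2\le\tilde k^2=\lambda_1(\alpha,\tilde Y)$. Otherwise $\tilde k<\pi/g_{\max}$, so all the numbers $\tilde k g_j/2$ lie in $\bigl(0,\tfrac{\pi}{2}\bigr)$ and the Perron picture is valid throughout $(0,\tilde k]$; combining the elementary identity
$$
(\varphi_1,M_Y(k)\varphi_1)=\frac{2k}{N}\sum_{j=1}^N\tan\!\Bigl(\frac{kg_j}{2}\Bigr)-\alpha
$$
with convexity of $\tan$ and Jensen's inequality $\tfrac1N\sum_j\tan(\tilde kg_j/2)\ge\tan\bigl(\tfrac{\tilde k}{2N}\sum_j g_j\bigr)=\tan(\pi\tilde k/N)$ gives
$$
\mu_Y(\tilde k)\ \ge\ (\varphi_1,M_Y(\tilde k)\varphi_1)\ \ge\ 2\tilde k\tan(\pi\tilde k/N)-\alpha\ =\ 0 .
$$
Since $\mu_Y(\cdot)$ is continuous with $\mu_Y(k)\to-\alpha<0$ as $k\to 0^{+}$, it vanishes at some $k_0\in(0,\tilde k]$; then $k_0^2\in\sigma(H_{\alpha,Y})$, whence $\lambda_1(\alpha,Y)\le k_0^2\le\tilde k^2$, which is the asserted inequality.

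For the strict version, $Y$ not congruent with $\tilde Y$ forces $g_{\max}>2\pi/N$: in the first case $\lambda_1(\alpha,Y)<\lambda_1(\mathrm{D},Y)$ is already strict, while in the second the gaps are not all equal, so Jensen's inequality is strict, $\mu_Y(\tilde k)>0$, and $k_0$ lies in the open interval $(0,\tilde k)$. The part of the scheme that needs the real work — and is, I expect, the main obstacle — is the first step: making the arc‑by‑arc ODE‑to‑matrix correspondence watertight (no spurious or missing roots of $\det M_Y(k)=0$ in the window $(0,\pi/g_{\max})$, simplicity and strict positivity of the ground state, and exclusion of an eigenvalue below $\lambda_1$ arising from a non‑Perron branch of $M_Y$), and checking that the two‑case split dovetails with the Dirichlet asymptotics \eqref{Dirlim}--\eqref{diropt} so as to absorb the very uneven configurations for which $\tilde k$ falls outside the range of validity of the convexity bound. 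The convexity step itself is then automatic; it is instructive that the non‑convexity of the repulsive resolvent kernel \eqref{1Dgreen+} (convex only for $k<\tfrac12$) gets traded, in the $M_Y$ parametrization, for convexity of $\tan$ on $\bigl(0,\tfrac{\pi}{2}\bigr)$, which is available precisely on the interval $k<\pi/g_{\max}$ where the ground state always lives.
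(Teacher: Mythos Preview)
The paper does not prove this statement: it is explicitly left as an open conjecture, motivated by the weak- and strong-coupling results of Theorems~\ref{thm:1Dweak} and~\ref{thm:1Dstrong}. So there is no proof in the paper to compare against; what you have written is a proposed resolution of the conjecture itself.

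Your argument appears to be correct, and in fact the concerns you flag at the end are largely illusory. The logical core needs much less than you suggest: you never use that $\lambda_1(\alpha,Y)$ corresponds to the \emph{smallest} root of $\det M_Y(k)=0$, nor do you need simplicity or positivity of the Schr\"odinger ground state. All that is required is the one implication ``$\mu_Y(k_0)=0\Rightarrow k_0^2\in\sigma(H_{\alpha,Y})$'', and this is immediate: the Perron eigenvector $v>0$ of $M_Y(k_0)$ determines, arc by arc, a nontrivial $H^2$ function satisfying \eqref{1Ddelta}--\eqref{periodic} with eigenvalue $k_0^2$, because $\sin(k_0g_j)\ne 0$ throughout $(0,\pi/g_{\max})$. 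From this, $\lambda_1(\alpha,Y)\le k_0^2\le\tilde k^2$ follows at once. The limit $\mu_Y(0^+)=-\alpha$ is also routine: as $k\to 0$ the matrix $M_Y(k)$ tends to $-\alpha I-L$ with $L$ the weighted cycle Laplacian (row sums zero, hence $L\varphi_1=0$ and $L\ge 0$), so the top eigenvalue is exactly $-\alpha$. The identification $\tilde k^2=\lambda_1(\alpha,\tilde Y)$ is equally clean, since for the circulant $M_{\tilde Y}(k)$ all eigenvalues lie below $\mu_{\tilde Y}(k)$ and hence are negative on $(0,\tilde k)$. Finally, the strict inequality $\lambda_1(\alpha,Y)<\lambda_1(\mathrm{D},Y)$ needed in Case~1 follows from the trial-function argument: the Dirichlet ground state on the longest arc, extended by zero, lies in the form domain of $q_{\alpha,Y}$ with Rayleigh quotient $\lambda_1(\mathrm{D},Y)$ but fails the jump condition \eqref{1Ddelta}, so it is not the minimizer.

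The genuine insight, which goes beyond anything in the paper, is the change of parametrization. The paper's Krein-matrix approach fails for $\alpha>0$ precisely because the kernel \eqref{1Dgreen+} loses convexity once $k\ge\tfrac12$ and because the relevant eigenvalue of $\Gamma_{\alpha,Y}(k)$ is no longer the lowest one. By passing to the Dirichlet-to-Neumann (transfer) matrix $M_Y(k)$ you trade both obstructions for the strict convexity of $\tan$ on $(0,\tfrac{\pi}{2})$, which is available exactly on the window $k<\pi/g_{\max}$ where the ground state is guaranteed to live, and the relevant eigenvalue becomes the Perron one. The two-case split then handles the configurations with a very long arc, for which $\tilde k$ may fall outside that window, by the Dirichlet bound \eqref{diropt}. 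This is a clean and, as far as I can see, complete proof of Conjecture~\ref{1Dconj}.
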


\section{Concluding remarks} \label{s:concl}
\setcounter{equation}{0}

From the physical point of view configurations minimizing energy may be more interesting as they are associated with stability. A maximizer represents an unstable equilibrium -- as the skydiver who allegedly attempted landing at the top of the Gateway Arc learned the hard way. If we deal with attractive interactions, though, no minimizing configurations may even exist; in the present context it is well known that if two point interactions in $\R^\nu,\: \nu=2,3$, approach each other the ground state escapes to negative infinity. At the same time, optimization may be nontrivial even if the interactions are repulsive as the discussion of Sec.~\ref{s:repulsive} shows: evenly spread interactions will \emph{not} minimize the energy if the energy cost of `huddling them together' would be lower.

The above results concerning one-dimensional point interactions can be also regarded as an optimization of the lowest spectral band threshold in periodic systems on the line, because this quantity is associated with the lowest periodic solution, cf.~\cite[Sec.~III.2.3]{AGHH} or \cite{EKW10}.  In a sense this determines a maximizer for point interactions distributed on the line with fixed density, but this claim cannot taken literally, rather in the way one treats usually the thermodynamic limit, taking a finite system with periodic boundary conditions and letting its side to go to infinity.

The hardest question coming from this discussion concerns without any doubt the sphere optimization. One might conjecture that for large $N$ and strong point interactions the optimal patter would be approximately hexagonal, however, it is not likely to be the case in the weak coupling, that is, for large positive $\alpha$. Generally speaking, any result going beyond the universally optimal configurations considered here should be considered a success.

\section*{Acknowledgments}

The author is obliged to Sylwia Kondej for a useful discussion, to the referees for careful reading of the manuscript, and to Institut Mitag-Leffler, where a part of the work was done, for the hospitality. The research was supported by the Czech Science Foundation (GA\v{C}R) within the project 17-01706S and by the EU project CZ.02.1.01/0.0/0.0/16\textunderscore 019/0000778.

\section*{References}


\begin{thebibliography}{10}
\bibitem[AGHH]{AGHH}
S.~Albeverio, F.~Gesztesy, R.~H{\o}egh-Krohn, H.~Holden: \emph{Solvable Models in Quantum Mechanics}, 2nd edition,  AMS Chelsea Publishing, Providence, R.I., 2005.
\bibitem[AFK17]{AFK17}
P. Antunes, P. Freitas, D. Krej\v{c}i\v{r}\'{\i}k: Bounds and extremal domains for Robin eigenvalues with negative boundary parameter, \emph{Adv. Calc. Var.} \textbf{10} (2017), 357--380.
\bibitem[BB09]{BB09}
E.~Bannai, E.~Bannai: A survey on spherical designs and algebraic combinatorics on spheres, \emph{Eur. J. Combin.} \textbf{30} (2009), 1392--1425.
\bibitem[Ba77]{Ba77}
M.~Bareket: On an isoperimetric inequality for the first eigenvalue of a boundary value problem, \emph{SIAM J. Math. Anal.} \textbf{8} (1977), 280--287.
\bibitem[CK07]{CK07}
H.~Cohen, A.~Kumar: Universally optimal distribution of point on the sphere, \emph{J. Amer. Math. Soc.} \textbf{20} (2007), 99--148.
\bibitem[Ex06]{Ex06}
P. Exner: Necklaces with interacting beads: isoperimetric problems, in \emph{Proceedings of the ``International Conference on Differential Equations and Mathematical Physics'' (Birmingham 2006)}, AMS ``Contemporary Math" Series, vol.~412, Providence, R.I., 2006; pp.~141--149.
\bibitem[EHL06]{EHL06}
P.~Exner, E.M.~Harrell, M.~Loss: Inequalities for means of chords, with application to isoperimetric problems, \emph{Lett. Math. Phys.} \textbf{75} (2006), 225--233; addendum \textbf{77} (2006), 219.
\bibitem[EK19]{EK19}
P.~Exner, S.~Kondej: An optimization problem for leaky star graphs of codimension two, \emph{in preparation}
\bibitem[EKW10]{EKW10}
P.~Exner, P.~Kuchment, B.~Winn: On the location of spectral edges in $\mathbb{Z}$-periodic media, \emph{J. Phys. A: Math. Theor.} \textbf{43} (2010), 474022
\bibitem[EL17]{EL17}
P.~Exner, V.~Lotoreichik: A spectral isoperimetric inequality for cones, \emph{Lett. Math. Phys.} \textbf{107} (2017), 717--732.
\bibitem[EL18]{EL18}
P.~Exner, V.~Lotoreichik: Optimization of the lowest eigenvalue for leaky star graphs, in Proceedings of the conference
``Mathematical Results in Quantum Physics'' (QMath13, Atlanta 2016; F.~Bonetto, D.~Borthwick, E.~Harrell, M.~Loss, eds.), \emph{Contemporary Math.}, vol.~717, AMS, Providence, R.I., 2018; pp.~187--196.
\bibitem[Fa23]{Fa23}
G. Faber: Beweis, dass unter allen homogenen Membranen von gleicher Fl\"ache und gleicher Spannung die kreisf\"ormige den tiefsten Grundton gibt, \emph{Sitzungsber. Bayer. Akad. Wiss. München, Math.-Phys. Kl.} (1923), 169--172.
\bibitem[FK15]{FK15}
P.~Freitas, D. Krej\v{c}i\v{r}\'{\i}k: The first Robin eigenvalue with negative boundary parameter, \emph{Adv. Math.} \textbf{280} (2015), 322--339.
\bibitem[Kr25]{Kr25}
E. Krahn: \"Uber eine von Rayleigh formulierte Minimaleigenschaft des Kreises, \emph{Math. Ann.} \textbf{94} (1925), 97--100.
\bibitem[Kr53]{Kr53}
M.G.~Krein: On the trace formula in perturbation theory, \emph{Mat. Sb.} \textbf{53} (1953), 597--626.
\bibitem[KL18]{KL18}
D. Krej\v{c}i\v{r}\'{\i}k, V.~Lotoreichik: Optimisation of the lowest Robin eigenvalue in the exterior of a compact set, \emph{J. Convex Anal.} \textbf{25} (2018), 319--337.
\bibitem[KL19]{KL19}
D. Krej\v{c}i\v{r}\'{\i}k, V.~Lotoreichik: Optimisation of the lowest Robin eigenvalue in the exterior of a compact set, II: non-convex domains and higher dimensions, \emph{Potential. Anal.}, to appear; \texttt{arXiv:1707.02269 [math.SP]}
\bibitem[PBM]{PBM}
A.P.~Prudnikov, Yu.A.~Brychov, O.I.~Marichev: \emph{Integrals and Series}, Nauka, Moscow 1981.
\bibitem[Sch13]{Sch13}
R.E.~Schwartz: The five-electron case of Thomson's problem, \emph{Experim. Math.} \textbf{22} (2013), 157--186.
\bibitem[Sm98]{Sm98}
S.~Smale: Mathematical Problems for the Next Century, \emph{Math. Intelligencer} \textbf{2} (1998), 7--15.
\bibitem[Th04]{Th04}
J.J.~Thomson: On the structure of the atom: an investigation of the stability and periods of oscillation of a number of corpuscles arranged at equal intervals around the circumference of a circle; with application of the results to the theory of atomic structure, \emph{Phil. Mag.} \textbf{7} (1904), 237--265.
\bibitem[Twiki]{Twiki}
\emph{https://en.wikipedia.org/wiki/Thomson$\underline{\phantom{n}}$problem}


\end{thebibliography}
\end{document}